\documentclass[11pt]{amsart}

\usepackage{amsfonts}
\usepackage{epsf,graphicx}
\usepackage{amscd}
\usepackage{amsmath,latexsym,amssymb,amsthm}
\usepackage{hyperref}
\usepackage{setspace,cite}
\usepackage{lscape,fancyhdr,fancybox}
\usepackage{a4}
\usepackage{dsfont,texdraw}

\def\Kmath{\mathbb{K}}

\newcounter{cnt1}
\newcounter{cnt2}
\newcounter{cnt3}
\newcommand{\blr}{\begin{list}{$($\roman{cnt1}$)$} {\usecounter{cnt1}
        \setlength{\topsep}{0pt} \setlength{\itemsep}{0pt}}}
\newcommand{\bla}{\begin{list}{$($\alph{cnt2}$)$} {\usecounter{cnt2}
        \setlength{\topsep}{0pt} \setlength{\itemsep}{0pt}}}
\newcommand{\bln}{\begin{list}{$($\arabic{cnt3}$)$} {\usecounter{cnt3}
                \setlength{\topsep}{0pt} \setlength{\itemsep}{0pt}}}
\newcommand{\el}{\end{list}}

\newtheorem{Thm}{Theorem}[section]

\newtheorem{Prop}[Thm]{Proposition}
\newtheorem{Def}[Thm]{Definition}
\newtheorem{Exm}[Thm]{Example}
\newtheorem{Rem}[Thm]{Remark}
\newtheorem{Cor}[Thm]{Corollary}
\newtheorem{Note}[Thm]{Note}

\begin{document}

\title{On exact Courant algebras}
\author{Ashis Mandal}
\begin{abstract}{
In this note we will show that exact Courant algebras over a Lie algebra $\mathfrak{g}$ can be characterised via Leibniz $2$- cocycles, and the automorphism group of a given exact Courant algebra is in a one-to-one correspondence with first Leibniz cohomology space of $\mathfrak{g}$.
}
\end{abstract}
\footnote{AMS Mathematics Subject Classification : $17$B$55$, $17$D.

}
\keywords{ Courant algebra, Leibniz algebra, Lie algebra, cohomology of Leibniz algebras }
\maketitle
\section{Introduction}
The aim of this paper is to characterise exact Courant algebras over a Lie algebra $\mathfrak{ g}$ via second Leibniz cohomology space with suitable Leibniz algebra representation of $\mathfrak{ g}$, also to find a correspondence among automorphism group of a given exact Courant algebras over $\mathfrak{ g}$ and first Leibniz cohomology space of $\mathfrak{ g}$ . The notion of Courant algebra was introduced by H. Bursztyn et. al ( \cite{BCG} ) in the context of reduction of Courant algebroids and generalized complex structures. They defined an extended action of Courant algebras on Courant algebroids and explained that the moment map in symplectic geometry obtains a new interpretation as an object which controls the extended part of the action. By definition, a Courant algebra over a Lie algebra $\mathfrak{g}$  is a Leibniz algebra along with a Leibniz algebra homomorphism to $\mathfrak{g}$. Leibniz algebras were introduced by J.-L. Loday in connection with cyclic homology and Hochschild homology of matrix algebras \cite{Lo1,Lo2}. A (co)homology with coefficient in a representation associated to Leibniz algebras has been developed in \cite{LP} .  Any Lie algebra is also a Leibniz algebra as in the presence of skew symmetry of the Lie bracket the Jacobi identity for the Lie bracket is equivalent to the Leibniz identity of the bracket. Thus one can deduce the Leibniz cohomology  for a given Lie algebra with coefficients in a suitable Leibniz representation of $\mathfrak{g}$. Here we consider the underlying Lie algebra for an exact Courant algebra and its Leibniz cohomology spaces with coefficients in the Leibniz representation given by the algebra. It is important to note that an exact Courant algebra nicely fit into the consideration of  an abelian extension of the Leibniz algebra $\mathfrak{g}$ by a suitable representation.

In section $2$, we recall definitions of Leibniz algebra and its cohomology. In section $3$, we present Courant algebras over a Lie algebra. In the last section we discuss about exact Courant algebras and Leibniz cohomology in dimension one and two.
In the whole paper we assume that $\mathbb{K}$ is a fixed field,  all the vector spaces are over the field $\mathbb{K}$ and maps are $\mathbb{K}$-linear maps.

\section{Leibniz algebra and its cohomology}
In this section, we recall the definition of a Leibniz algebra and describe its cohomology.  

\begin{Def} A  Leibniz algebra (left Leibniz algebra) is a $\Kmath$-module $L$, equipped with a bracket
 operation, which is $\mathbb{K}$-bilinear and satisfies the Leibniz identity,
 $$
 [x,[y,z]]=[[x,y],z] + [y, [x,z]] \quad for \quad x,y,z \in L.
 $$
\end{Def}
In the presence of antisymmetry of the bracket operation, the Leibniz identity is
equivalent to the Jacobi identity, hence any Lie algebra is a Leibniz
algebra.
\begin{Rem}
For a Leibniz algebra defined above,  the left adjoint operations $[x,-]$ are required to be derivations for any
$x \in L.$  Analogously, Leibniz algebra ( right Leibniz algebra ) can be defined by the requirement to be satisfied is that the right adjoint map $[-,x]$ are  derivations for any $x \in L.$ In that case, the Leibniz identity in the above definition would be of the form: $$[x,[y,z]]= [[x,y],z]-[[x,z],y] ~~\mbox{for}~x,~y,~z \in L.$$ 
 Sometime, in the literature ( e.g., \cite{Lo1, Lo2, LP}) right Leibniz algebra has been considered. One need to be careful in translating from one to another. All our Leibniz algebras here will be left Leibniz algebras unless otherwise stated. 
\end{Rem}
\begin{Exm}
Let $(L,d)$ be a differential Lie algebra with the Lie bracket $[,]$. Then $L$ is a Leibniz algebra with the bracket operation $[x,y]_d:= [dx,y]$. The new bracket on $L$ is  called the derived bracket.
\end{Exm}
For more example see \cite{Lo1, Lo2, LP} , \cite {HM02, JMF09}.

Let $L$ be a Leibniz algebra and $M$ a representation of $L$. By
definition (\cite{LP,JMF09}), $M$ is a $\Kmath$-module equipped with two actions (left and
right) of $L$, 
$$
[-,-]:L \times M \rightarrow M \qquad \text{and} ~~~[-,-]:M \times L
\rightarrow M \qquad \text{such that}
$$
$$
[x,[y,z]]=[[x,y],z] + [y,[x,z]]
$$
holds, whenever one of the variables is from $M$ and the two others
from $L$.

Define $CL^n(L;M):=\text{Hom}_{\Kmath}(L^{\otimes n},M)$, $n \geq 0$.
Let
$$
\delta^n: CL^n(L;M) \rightarrow CL^{n+1}(L;M)
$$
be a $\Kmath$-homomorphism defined by

\begin{multline*}\label{Leibniz coboundary}
(\delta^n \psi)(X_1,X_2, \cdots , X_{n+1}) \\=\sum_{i=1}^{n} (-1)^{i-1} [X_i,\psi(X_1, \cdots, \hat{X_i}, \cdots, X_{n+1})]
 +(-1)^{n+1} [\psi(X_1, \cdots, X_{n}), X_{n+1}]  \\+
 \sum_{1\leqslant i < j \leqslant n+1} (-1)^{i}  
 \psi(X_1, \cdots, \hat{X_i},\cdots, X_{j-1},[X_i,X_j],X_{j+1}\cdots, X_{n+1}).
\end{multline*}
Then $(CL^*(L;M),\delta)$ is a cochain complex, whose cohomology is denoted by $HL^*(L; M)$, is
called the cohomology of the Leibniz algebra $L$ with coefficients in
the representation $M$ (see \cite{LP, JMF09}). In particular, $L$ is a representation of
itself with the obvious action given by the bracket in $L$. 

\begin{Rem}\label{Leibniz cohomology for Lie algebra}
Suppose $\mathfrak{g}$ is a Lie algebra then we may view it as a Leibniz algebra and if $\mathfrak{h}$ is a representation of $\mathfrak{g}$ equipped with two action defined above then we can talk of the Leibniz cochain complex  $(CL^{*}(\mathfrak{g}; \mathfrak{h}))$ and hence the cohomology space $HL^{*}(\mathfrak{g}; \mathfrak{h})$ .
\end{Rem}

\section{Courant algebras over a Lie algebra}
In the present section, we recall the definitions of a Courant algebra and exact Courant algebra from \cite{BCG}. Let $\mathfrak{g}$  be a Lie algebra.

\begin{Def}\label{Courant algebra}
A {\it Courant algebra }  over the Lie algebra $\mathfrak{g}$ is a vector space $\mathfrak{a}$ equipped with a bilinear bracket $[-,-]:\mathfrak{a} \times \mathfrak{a} \longrightarrow \mathfrak{a}$ and a map $\pi: \mathfrak{a} \longrightarrow \mathfrak{g}$, which satisfy the following conditions for all $a_1,a_2,a_3 \in \mathfrak{a}:$

\begin{equation}\label{Courant alg condition}
 \begin{split}
  &[a_1,[a_2, a_3]]=[[a_1,a_2],a_3]+ [a_2,[a_1,a_3]],\\
  & \pi([a_1,a_2])= [\pi(a_1), \pi(a_2)].
 \end{split}
\end{equation}
In other words, $\mathfrak{a}$ is a Leibniz algebra with a homomorphism $\pi$  from $\mathfrak{a}$ to $\mathfrak{g}$. We will simply denote it as $\pi: \mathfrak{a} \longrightarrow \mathfrak{g}$ is a Courant algebra.
\end{Def}
A Courant algebroid over a smooth manifold $M$  gives an example of a Courant algebra over the Lie algebra of vector fields $\mathfrak{g}=\Gamma(TM)$ by taking $\mathfrak{a}$ as the Leibniz algebra structure on the space of sections of the underlying vector bundle of the Courant algebroid.  From \cite{RW} it follows that any Courant algebra is actually an example of a $2$- term $L_\infty$ algebra \cite{BC,St}.

\begin{Def}\label{exact Courant algebra}
An  {\it exact} Courant algebra  over the Lie algebra $\mathfrak{g}$ is a Courant algebra $\pi: \mathfrak{a} \longrightarrow \mathfrak{g}$ for which $\pi$ is a surjective linear map and $\mathfrak{h}=ker (\pi)$ is abelian, i.e. $[h_1,h_2]=0$ for all $h_1,h_2 \in \mathfrak{h}$.
\end{Def}
For an exact Courant algebra $\pi: \mathfrak{a} \longrightarrow \mathfrak{g}$, there are actions of  $\mathfrak{g}$ on $\mathfrak{h}$: 
$[g,h]=[a,h]$ and $[h,g]=[h,a]$ for any $a$ such that $\pi(a)=g$. This defines $\mathfrak{h}$ as a representation of  $\mathfrak{g}$ viewed as a Leibniz algebra.

The next example will give a natural exact Courant algebra associated with any representation of the Lie algebra $\mathfrak{g}$.

\begin{Exm} (Hemisemidirect product). Let $\mathfrak{g}$ be a Lie algebra acting on the vector space $\mathfrak{h}$. Then 
$\mathfrak{a}=\mathfrak{g}\oplus \mathfrak{h}$ becomes an exact Courant algebra over $\mathfrak{g}$ via the bracket defined for $(g_1,h_1),(g_2,h_2) \in \mathfrak{a}$ as follows.
$$ [(g_1,h_1),(g_2,h_2)]=([g_1,g_2], g_1. h_2),$$
where $g.h$ denotes the action of the Lie algebra $\mathfrak{g}$ on $\mathfrak{h}$.
\end{Exm}
This bracket is called {\it hemisemidirect} product in \cite{KW}. If we consider above a Leibniz representation $\mathfrak{h}$ of the Lie algebra $\mathfrak{g}$, then 
$\mathfrak{a}=\mathfrak{g}\oplus \mathfrak{h}$ becomes an exact Courant algebra over $\mathfrak{g}$ via the bracket 
$$ [(g_1,h_1),(g_2,h_2)]=([g_1,g_2], [g_1, h_2]+[h_1,g_2]),$$
where the actions (left and right) of $\mathfrak{g}$ on $\mathfrak{h}$ is denoted by the same bracket $[-,-]$.
Next result will describe more exact Courant algebras on $\mathfrak{a}=\mathfrak{g}\oplus \mathfrak{h}$.
\begin{Prop}\label{algebra induced by cocycle}
Every $2$-cocycle in $CL^2(\mathfrak{g}; \mathfrak{h})$ defines an exact Courant algebra structure over $\mathfrak{g}$.
\end{Prop}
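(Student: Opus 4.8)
The plan is to take $\mathfrak{a} = \mathfrak{g} \oplus \mathfrak{h}$ as the underlying vector space, with $\pi \colon \mathfrak{a} \to \mathfrak{g}$ the projection onto the first factor, and to deform the hemisemidirect bracket by the given cocycle. Explicitly, for a $2$-cocycle $\psi \in CL^2(\mathfrak{g};\mathfrak{h})$ I would define
\[ [(g_1,h_1),(g_2,h_2)] := ([g_1,g_2],\, [g_1,h_2]+[h_1,g_2]+\psi(g_1,g_2)), \]
where $[g,h]$ and $[h,g]$ denote the left and right actions of $\mathfrak{g}$ on the representation $\mathfrak{h}$. One then has to verify the conditions of Definition~\ref{Courant algebra} and Definition~\ref{exact Courant algebra} for this data.

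The two easy conditions come first. Since the $\mathfrak{g}$-component of the bracket is just $[g_1,g_2]$, the map $\pi$ satisfies $\pi([a_1,a_2]) = [g_1,g_2] = [\pi(a_1),\pi(a_2)]$, so $\pi$ is a Leibniz homomorphism; it is visibly surjective and linear with $\ker\pi = \{(0,h) : h \in \mathfrak{h}\} \cong \mathfrak{h}$. Bilinearity of $\psi$ and of the two actions gives $[(0,h_1),(0,h_2)] = (0, \psi(0,0)) = (0,0)$, so $\ker\pi$ is abelian. Thus the only substantive point is the Leibniz identity for the bracket on $\mathfrak{a}$.

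To verify the Leibniz identity I would expand $[a_1,[a_2,a_3]]$, $[[a_1,a_2],a_3]$ and $[a_2,[a_1,a_3]]$ for $a_i = (g_i,h_i)$ and compare the two components. The $\mathfrak{g}$-component reduces to the Leibniz (Jacobi) identity in $\mathfrak{g}$ and holds automatically. In the $\mathfrak{h}$-component I would sort the resulting terms according to whether they carry $h_1$, $h_2$, $h_3$, or $\psi$. The terms carrying a fixed $h_i$ group into three instances of the representation axiom
\[ [x,[y,z]] = [[x,y],z] + [y,[x,z]] \]
with exactly one argument taken from $\mathfrak{h}$ (namely $z=h_3$, then $y=h_2$, then $x=h_1$), so each such block cancels on its own.

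The key step — and the only place where the cocycle hypothesis enters — is the remaining collection of $\psi$-terms. Gathering them, the identity left to check is
\begin{multline*}
[g_1,\psi(g_2,g_3)] - [g_2,\psi(g_1,g_3)] - [\psi(g_1,g_2),g_3] \\
- \psi([g_1,g_2],g_3) - \psi(g_2,[g_1,g_3]) + \psi(g_1,[g_2,g_3]) = 0,
\end{multline*}
which is precisely $(\delta^2\psi)(g_1,g_2,g_3) = 0$, i.e. the statement that $\psi$ is a $2$-cocycle. I expect this bookkeeping — confirming that the scattered $\psi$-contributions reassemble exactly into $\delta^2\psi$ with the correct signs — to be the main (though purely computational) obstacle; once it is in place, the Leibniz identity, and hence the exact Courant algebra structure over $\mathfrak{g}$, follows.
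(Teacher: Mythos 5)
Your proposal is correct and follows essentially the same route as the paper: the paper's proof takes $\mathfrak{a}=\mathfrak{g}\oplus\mathfrak{h}$ with the same cocycle-twisted bracket and projection $\pi$, and simply asserts that $\delta^2 f=0$ together with the representation axioms yields the Leibniz identity. You have merely carried out in detail the verification the paper leaves implicit (and your sign bookkeeping for $\delta^2\psi$ is accurate, matching the bracket form $[h_1,g_2]$ that the paper itself uses later in Theorem~\ref{2 class and exact Courant algebras}).
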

\begin{proof}
Let $f \in CL ^2 (\mathfrak{g};\mathfrak{h})$ be a $2$-cocycle. Consider $\mathfrak{a}=\mathfrak{g} \oplus \mathfrak{h}$ and define a bracket operation 
$$[(g_1, h_1) ,(g_2, h_2)]=([g_1, g_2],~ [g_1, h_2] + [g_2 , h_1]+f(g_1, g_2))$$  for $(g_1,h_1), (g_2,h_2) \in \mathfrak{a}$. 

Since $\delta^2 f =0$, and $\mathfrak{h}$ is representation of $\mathfrak{g}$, the bracket defined above satisfies Leibniz identity on $\mathfrak{a}$. Thus $ \mathfrak{a} \stackrel{\pi}{\longrightarrow}\mathfrak{g}$ is an exact Courant algebra over the Lie algebra  $\mathfrak{g}$, where  $\pi(g,h)=g$ for $(g,h) \in \mathfrak{a}$.
\end{proof}
\begin{Note}
Suppose $\pi: \mathfrak{a} \longrightarrow \mathfrak{g}$ is an exact Courant algebra.  If we fix a section of the $\mathbb{K}$-linear map $\pi$, then we get an abelian extension  of the Leibniz algebra $\mathfrak{g}$ by its representation $\mathfrak{h}$:
$$0\longrightarrow {\mathfrak{h}}\stackrel{i}{\longrightarrow} \mathfrak{a} \stackrel{\pi}{\longrightarrow}\mathfrak{g}\longrightarrow 0,$$
where $i$ denotes the inclusion map (see  \cite{LP}). Later on we will use this abelian extension form for an exact Courant algebra.
\end{Note}
Let $\pi: \mathfrak{a} \longrightarrow \mathfrak{g} $ and $\pi^\prime: \mathfrak{a}^\prime \longrightarrow \mathfrak{g}$ be two such  exact Courant algebras with $ker (\pi)=\mathfrak{h}=ker (\pi^\prime)$. 

A $\mathbb{K}$-linear map $F: \mathfrak{a} \longrightarrow \mathfrak{a}^\prime$  is said to be an exact Courant algebra homomorphism if it is a Leibniz algebra homomorphism  such that following diagram commutes.
\[
 \begin{CD}
 0 @>>> \mathfrak{h} @>{i}>> \mathfrak{a} @>{\pi}>> \mathfrak{g} @>>> 0\\
          @.    @|  @V{F}VV @|    \\
0 @>>> \mathfrak{h} @>{i}>>\mathfrak{a}^\prime @>{\pi^\prime}>> \mathfrak{g} @>>> 0
 \end{CD}
\]
Likewise, we can define an isomorphism of exact Courant algebras. We present a description of  the set of all isomorphism classes of  a given exact Courant algebra over a Lie algebra.
\section{Exact Courant algebras and Leibniz cohomology in dimension one and two}
Suppose $\mathfrak{g}$ is a Lie algebra which is now viewed as a Leibniz algebra and $\mathfrak{h}$ is an abelian Lie algebra endowed with two actions (left and right ) denoted by the same bracket notation $[-,-]$ give rise to a representation of $\mathfrak{g}$.

\begin{Prop}\label{induced by a class}
Every cocycle in the same cohomology class in $HL ^2 (\mathfrak{g};\mathfrak{h})$  defines an isomorphic exact Courant algebra over $\mathfrak{g}$.
\end{Prop}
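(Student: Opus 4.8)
The plan is to realize the passage between cohomologous cocycles as an explicit change of section of the abelian extension $0 \to \mathfrak{h} \to \mathfrak{a} \to \mathfrak{g} \to 0$. First I would fix two $2$-cocycles $f, f' \in CL^2(\mathfrak{g};\mathfrak{h})$ lying in the same class of $HL^2(\mathfrak{g};\mathfrak{h})$, so that $f - f' = \delta^1 \phi$ for some $\phi \in CL^1(\mathfrak{g};\mathfrak{h}) = \mathrm{Hom}_{\mathbb{K}}(\mathfrak{g},\mathfrak{h})$, and denote by $\mathfrak{a}_f$ and $\mathfrak{a}_{f'}$ the two exact Courant algebra structures on $\mathfrak{g}\oplus\mathfrak{h}$ produced by Proposition~\ref{algebra induced by cocycle}. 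The natural candidate for an isomorphism is the map $F : \mathfrak{a}_f \to \mathfrak{a}_{f'}$ defined by $F(g,h) = (g,\, h + \phi(g))$, which uses the $1$-cochain $\phi$ to twist the chosen section.

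Next I would dispose of the structural requirements, which are immediate. The map $F$ is $\mathbb{K}$-linear and is a bijection with inverse $(g,h) \mapsto (g,\, h - \phi(g))$, since $\phi$ is linear. Because $\phi(0)=0$, $F$ restricts to the identity on $\mathfrak{h} = \{0\}\oplus\mathfrak{h}$, and clearly $\pi' \circ F = \pi$; hence $F$ fits into the commuting ladder that defines an exact Courant algebra homomorphism, and it remains only to verify that $F$ intertwines the two brackets.

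The heart of the argument is therefore the identity $F([a,b]_f) = [F(a), F(b)]_{f'}$ for all $a,b \in \mathfrak{a}$. Writing $a=(g_1,h_1)$ and $b=(g_2,h_2)$ and expanding both sides with the bracket of Proposition~\ref{algebra induced by cocycle}, the $\mathfrak{g}$-components agree automatically, while the $\mathfrak{h}$-components differ by $(f-f')(g_1,g_2) + \phi([g_1,g_2]) - [g_1,\phi(g_2)] - [\phi(g_1),g_2]$. The key point is that the three terms built from $\phi$ are exactly $-\delta^1\phi(g_1,g_2)$ once the explicit coboundary formula for $n=1$ is written down, so the entire discrepancy collapses to $(f - f' - \delta^1\phi)(g_1,g_2) = 0$. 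I expect the main obstacle to be purely bookkeeping: keeping the left action $[g,-]$ and the right action $[-,g]$ of $\mathfrak{g}$ on $\mathfrak{h}$ in their correct slots when expanding $[F(a),F(b)]_{f'}$, so that the cross terms produced by $\phi(g_1)$ and $\phi(g_2)$ line up with the two bracket terms of $\delta^1\phi$ rather than interfering with each other. Once this matching is confirmed, $F$ is a Leibniz algebra homomorphism and hence the desired isomorphism of exact Courant algebras, proving that any two representatives of a single class in $HL^2(\mathfrak{g};\mathfrak{h})$ yield isomorphic exact Courant algebras over $\mathfrak{g}$.
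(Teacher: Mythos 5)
Your proposal is correct and follows essentially the same route as the paper: writing $f - f' = \delta^1\phi$ and twisting by the $1$-cochain via $F(g,h) = (g, h+\phi(g))$, then checking $F$ is a Leibniz homomorphism compatible with the extension ladder. The only difference is that you carry out the bracket verification explicitly (correctly identifying the discrepancy as $(f - f' - \delta^1\phi)(g_1,g_2)$), whereas the paper leaves that computation to the reader.
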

\begin{proof}
Let $f$ be a representative of the class  $[f]\in  HL ^2 (\mathfrak{g};\mathfrak{h})$.  Then by Proposition \ref{algebra induced by cocycle}, we get an exact Courant algebra:
$$0\longrightarrow {\mathfrak{h}}\stackrel{i}{\longrightarrow} \mathfrak{a} \stackrel{\pi}{\longrightarrow}\mathfrak{g}\longrightarrow 0.$$
Suppose we take another representative $f^\prime$ of the class $[f]\in HL ^2 (\mathfrak{g};\mathfrak{h})$ and get another exact Courant algebra given by,
$$0\longrightarrow {\mathfrak{h}}\stackrel{i}{\longrightarrow} \mathfrak{a^\prime} \stackrel{\pi}{\longrightarrow}\mathfrak{g}\longrightarrow 0.$$
 Now $f -f^\prime=\delta^1 \psi$ for some $\psi \in Hom(\mathfrak{g}; \mathfrak{h})$. Let us define a $\mathbb{K}$- linear map $F:\mathfrak{a}\longrightarrow \mathfrak{a}^\prime$  by $F(g, h)=(g, h +\psi(g))$. Then it follows that $F$ is a Leibniz algebra homomorphism such that following diagram commutes.
 \[
 \begin{CD}
 0 @>>> \mathfrak{h} @>{i}>> \mathfrak{a} @>{\pi}>> \mathfrak{g} @>>> 0\\
          @.    @|  @V{F}VV @|    \\
0 @>>> \mathfrak{h} @>{i}>>\mathfrak{a}^\prime @>{\pi^\prime}>> \mathfrak{g} @>>> 0
 \end{CD}
\]
This gives an isomorphism of the exact Courant algebras defined by the cocycles $f$ and $f^\prime$. Thus $[f]\in HL ^2 (\mathfrak{g};\mathfrak{h})$ corresponds to an isomorphism classes of exact Courant algebra over $\mathfrak{g}$.
\end{proof}

 The next result will characterise an exact Courant algebra over $\mathfrak{g}$ via its second Leibniz cohomology  space. 
\begin{Thm}\label{2 class and exact Courant algebras}
Elements of $ HL ^2 (\mathfrak{g};\mathfrak{h})$  corresponds bijectively to isomorphism classes of exact Courant algebras
$$0\longrightarrow {\mathfrak{h}}\stackrel{i}{\longrightarrow} \mathfrak{a} \stackrel{\pi}{\longrightarrow}\mathfrak{g}\longrightarrow 0.$$
\end{Thm}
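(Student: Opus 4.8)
The plan is to show that the assignment $[f] \mapsto (\mathfrak{a}_f \xrightarrow{\pi} \mathfrak{g})$, sending a cohomology class to the exact Courant algebra built from a representative $f$ as in Proposition \ref{algebra induced by cocycle}, is a bijection onto isomorphism classes. Proposition \ref{induced by a class} already shows this assignment is well defined on $HL^2(\mathfrak{g};\mathfrak{h})$, since cohomologous cocycles yield isomorphic exact Courant algebras. It therefore remains to establish surjectivity and injectivity.

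For surjectivity I would begin with an arbitrary exact Courant algebra $0 \to \mathfrak{h} \xrightarrow{i} \mathfrak{a} \xrightarrow{\pi} \mathfrak{g} \to 0$ and, using that $\pi$ is a surjective $\mathbb{K}$-linear map, choose a $\mathbb{K}$-linear section $s \colon \mathfrak{g} \to \mathfrak{a}$ with $\pi \circ s = \mathrm{id}_{\mathfrak{g}}$. I then set $f(g_1,g_2) = [s(g_1), s(g_2)] - s([g_1,g_2])$. Applying $\pi$ and using that it is a Leibniz homomorphism gives $\pi(f(g_1,g_2)) = [g_1,g_2] - [g_1,g_2] = 0$, so $f$ takes values in $\mathfrak{h} = \ker \pi$ and defines an element of $CL^2(\mathfrak{g};\mathfrak{h})$. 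The key point is that $\delta^2 f = 0$: expanding the Leibniz identity in $\mathfrak{a}$ for the triple $s(g_1), s(g_2), s(g_3)$ and rewriting mixed brackets through the induced actions $[g,h] = [s(g), i(h)]$ and $[h,g] = [i(h), s(g)]$ reorganises all terms into exactly the coboundary formula. Finally, the $\mathbb{K}$-linear isomorphism $\mathfrak{a} \cong \mathfrak{g} \oplus \mathfrak{h}$ determined by $s$ and $i$ carries the bracket of $\mathfrak{a}$ to that of $\mathfrak{a}_f$, producing an exact Courant algebra isomorphism and hence hitting the class of $\mathfrak{a}$.

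For injectivity I would suppose $\mathfrak{a}_f$ and $\mathfrak{a}_{f'}$ are isomorphic via $F \colon \mathfrak{a}_f \to \mathfrak{a}_{f'}$ fitting into the commuting diagram that is the identity on $\mathfrak{h}$ and on $\mathfrak{g}$. Commutativity with $\pi$, $\pi'$ together with the identity on $\mathfrak{h}$ forces $F(g,h) = (g, h + \psi(g))$ for a unique $\mathbb{K}$-linear $\psi \colon \mathfrak{g} \to \mathfrak{h}$, i.e. $\psi \in CL^1(\mathfrak{g};\mathfrak{h})$. Imposing that $F$ is a Leibniz homomorphism and comparing the $\mathfrak{h}$-components of $F[(g_1,h_1),(g_2,h_2)]$ and $[F(g_1,h_1),F(g_2,h_2)]$ then yields $f(g_1,g_2) - f'(g_1,g_2) = [g_1,\psi(g_2)] + [\psi(g_1),g_2] - \psi([g_1,g_2]) = (\delta^1 \psi)(g_1,g_2)$, so $f - f' = \delta^1 \psi$ and $[f] = [f']$ in $HL^2(\mathfrak{g};\mathfrak{h})$.

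I expect the main obstacle to be the surjectivity step, specifically the verification that $\delta^2 f = 0$: one must carefully expand the Leibniz identity in $\mathfrak{a}$ for $s(g_1),s(g_2),s(g_3)$, split off the $\mathfrak{g}$- and $\mathfrak{h}$-components, and confirm that the failure of $s$ to be a homomorphism is measured entirely by $f$ so that the surviving terms collapse into $\delta^2 f$. By contrast, the injectivity computation is comparatively routine once the form $F(g,h)=(g,h+\psi(g))$ is pinned down, since it reduces to matching the single cocycle-correction term against $\delta^1\psi$.
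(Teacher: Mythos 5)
Your proposal is correct and follows essentially the same route as the paper: both hinge on choosing a $\mathbb{K}$-linear section of $\pi$, forming the cocycle $f(g_1,g_2)=[s(g_1),s(g_2)]-s([g_1,g_2])$ that measures its failure to be a homomorphism, transporting the bracket to $\mathfrak{g}\oplus\mathfrak{h}$, and identifying the discrepancy between two such data with a coboundary $\delta^1\psi$. The only difference is organizational --- you verify surjectivity and injectivity of the map $[f]\mapsto\mathfrak{a}_f$, while the paper verifies well-definedness of the inverse assignment $\mathfrak{a}\mapsto[\phi_q]$ under change of section and under isomorphism --- but the underlying computations are identical.
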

\begin{proof}
By Proposition \ref{induced by a class}, we find an isomorphism class of exact Courant algebras associated to any element of $ HL ^2 (\mathfrak{g};\mathfrak{h})$.

 We now consider an exact Courant algebra
$$0\longrightarrow {\mathfrak{h}}\stackrel{i}{\longrightarrow} \mathfrak{a} \stackrel{\pi}{\longrightarrow}\mathfrak{g}\longrightarrow 0.$$

Fix a section $q:\mathfrak{g}\longrightarrow \mathfrak{a}$ of the projection $\pi$. \\ Define $$\alpha:\mathfrak{a}\longrightarrow \mathfrak{g}\oplus \mathfrak{h}~~ by ~~\alpha(x)=(\pi(x), x -q\circ \pi(x)).$$
 From definition $\alpha$ is a $\mathbb{K}$-linear map.  Now if $\alpha(x)=(0,0)$ then $\pi(x)=0$ and $x-q\circ \pi(x)=0$, hence $ x=0$.  So $\alpha$ is an injective linear map.
Suppose $(g,h)\in \mathfrak{g}\oplus \mathfrak{h}$, take $x=q(g)+h$, then $\pi(x)=g$ and $x-q\circ \pi(x)= q(g)+h-q(g)=h$.  Therefore $\alpha(x)=\alpha(q(g)+h)=(g,h)$, showing $\alpha$ is an onto map. Thus $\alpha$ is a $\mathbb{K}$-module isomorphism.
Let $(g,h)_q$ denotes the inverse image $q(g)+h$ of $(g,h) \in \mathfrak{g} \oplus \mathfrak{h} $ under the above isomorphism $\alpha$. 
Define $\phi_q:\mathfrak{g}^{\otimes 2}\longrightarrow \mathfrak{h}$ by 
\begin{equation}\label{defn of phiq}
\phi_q(g_1,g_2)= [(g_1,0)_q,(g_2,0)_q]-([g_1, g_2],0)_q = [q(g_1), q(g_2)] - q([g_1,g_2])
\end{equation}
for $g_1,g_2\in \mathfrak{g}$.
Now for $(g_1,h_1)_q$ and $(g_2,h_2)_q$ in $\mathfrak{a}$ we can write down the bracket as follows.
\begin{equation*}
 \begin{split}
&~[(g_1,h_1)_q, (g_2,h_2)_q]\\
=&~[q(g_1),q(g_2)]+[q(g_1), h_2]+[h_1, q(g_2)]+[h_1,h_2]\\
=&~[q(g_1),q(g_2)]+[\pi \circ q(g_1), h_2]+[h_1, \pi \circ q(g_2)] \\
=&~q([g_1,g_2])+ [g_1, h_2]+[h_1,g_2]+\phi_q(g_1,g_2)\\
=&~([g_1,g_2]~,~[g_1, h_2]+[h_1,g_2]+\phi_q(g_1,g_2))_q.
\end{split}
\end{equation*}
Thus the bracket operation in $\mathfrak{a} $ can be expressed as,
$$[(g_1,h_1)_q, (g_2,h_2)_q] = ~([g_1,g_2]~,~[g_1, h_2]+[h_1,g_2]+\phi_q(g_1,g_2))_q$$
Suppose $(g_1,h_1)_q,(g_2,h_2)_q$ and $(g_3,h_3)_q \in \mathfrak{a}$, using the Leibniz identity of the algebra multiplication in $\mathfrak{a}$, we get,
\begin{equation*}
 \begin{split}
&[( g_1,h_1)_q , [ (g_2,h_2)_q, (g_3,h_3)_q ]] \\
=&[ [ (g_1,h_1)_q, (g_2,h_2)_q], (g_3,h_3)_q ]+ [ (g_2,h_2)_q, [(g_1,h_1)_q ,(g_3,h_3)_q] ]=0.
\end{split}
\end{equation*}
or,  $\delta^2 \phi_q(g_1,~g_2,~g_3)= 0$. So $\phi_q \in  CL^2(\mathfrak{g};\mathfrak{h})$ is a cocycle. 

Let $q':\mathfrak{g}\longrightarrow \mathfrak{a}$ be another section of $\pi$. Replacing $q$ by $q^\prime$ in the above argument, will give rise to a cocycle $\phi_{q^\prime}\in CL^2(\mathfrak{g};\mathfrak{h})$.
Set $\beta= (q'-q) \in  Hom(\mathfrak{g}; \mathfrak{h})$. Then $\phi_{q'}-\phi_{q}=\delta^1 \beta$. Thus for a given exact courant algebra $\pi: \mathfrak{a} \longrightarrow \mathfrak{g}$  there is a unique cohomology class $[\phi_q]$ in $HL^2(\mathfrak{g};\mathfrak{h})$.

Let 
$ \pi^\prime: \mathfrak{a}^\prime \longrightarrow \mathfrak{g}$ be another exact Courant algebra over $\mathfrak{g}$ with the same linear space $\mathfrak{h}$ as kernel of $\pi^\prime$. assume that it is isomorphic to the exact Courant algebra $\mathfrak{a}$ we took before. Let $F : \mathfrak{a}\longrightarrow \mathfrak{a}^\prime$ be an isomorphism of exact Courant algebras  $\mathfrak{a}$ and $\mathfrak{a}^\prime$. Thus the following diagram commutes.
\[
 \begin{CD}
 0 @>>> \mathfrak{h} @>{i}>> \mathfrak{a} @>{\pi}>> \mathfrak{g} @>>> 0\\
          @.    @|  @V{F}VV @|    \\
0 @>>> \mathfrak{h} @>{i}>>\mathfrak{a}^\prime @>{\pi^\prime}>> \mathfrak{g} @>>> 0
 \end{CD}
\]
 Then $q^\prime =f \circ q :\mathfrak{g} \longrightarrow \mathfrak{a}^\prime$ is a section for $\pi^\prime$. We have $\phi_{q^\prime}:\mathfrak{g}^{\otimes 2} A\longrightarrow \mathfrak{h}$ defined by 
$$\phi_{q^\prime}(g_1,g_2)=[q^\prime(g_1), q^{\prime}(g_2)] - q^\prime([g_1, g_2]) ~(\mbox{cf.}~ (\ref{defn of phiq}))$$
 Consequently $\phi_{q}$ and $\phi_{q^\prime}$ represents the same class in $HL^2(\mathfrak{g}; \mathfrak{h})$.
\end{proof}
\begin{Cor}
The characteristic element of a Leibniz algebra $\mathfrak{a}$ defines an isomorphism class of an exact Courant algebra over the Lie algebra obtained by taking quotient with the Leibniz kernel of $\mathfrak{a}$. 
\end{Cor}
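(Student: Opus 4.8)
The plan is to realise $\mathfrak{a}$ itself as an exact Courant algebra over the quotient Lie algebra and then read off the characteristic element through Theorem \ref{2 class and exact Courant algebras}. Write $\mathfrak{h}=\mathrm{Leib}(\mathfrak{a})$ for the Leibniz kernel, i.e.\ the $\Kmath$-submodule of $\mathfrak{a}$ generated by all squares $[x,x]$, $x\in\mathfrak{a}$, and set $\mathfrak{g}=\mathfrak{a}/\mathfrak{h}$ with quotient map $\pi$. Once I verify that $0\longrightarrow {\mathfrak{h}}\stackrel{i}{\longrightarrow} \mathfrak{a} \stackrel{\pi}{\longrightarrow}\mathfrak{g}\longrightarrow 0$ is an exact Courant algebra, the theorem supplies the desired correspondence.

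The substantive step is to extract the structural properties of $\mathfrak{h}$ from the left Leibniz identity. Putting $y=x$ in $[x,[y,z]]=[[x,y],z]+[y,[x,z]]$ gives $[[x,x],z]=0$, so by bilinearity $[\mathfrak{h},\mathfrak{a}]=0$; in particular $\mathfrak{h}$ is abelian. Polarising a square, $[x,y]+[y,x]=[x+y,x+y]-[x,x]-[y,y]\in\mathfrak{h}$, which shows the induced bracket on $\mathfrak{g}$ is antisymmetric; together with the Leibniz identity this makes $\mathfrak{g}$ a Lie algebra. For the left ideal property I would expand $[z,[x,x]]=[[z,x],x]+[x,[z,x]]$ and note that the right-hand side is $[w,x]+[x,w]$ with $w=[z,x]$, which lies in $\mathfrak{h}$ by the same polarisation identity; combined with $[\mathfrak{h},\mathfrak{a}]=0$ this makes $\mathfrak{h}$ a two-sided ideal.

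With these facts in hand, $\pi$ is a surjective Leibniz homomorphism with abelian kernel $\mathfrak{h}$, so $0\longrightarrow {\mathfrak{h}}\stackrel{i}{\longrightarrow} \mathfrak{a} \stackrel{\pi}{\longrightarrow}\mathfrak{g}\longrightarrow 0$ is an exact Courant algebra over $\mathfrak{g}$. The representation of $\mathfrak{g}$ on $\mathfrak{h}$ is the canonical one associated to an exact Courant algebra: the left action $[g,h]=[a,h]$ (for any $a$ with $\pi(a)=g$) is well defined because any ambiguity lies in $[\mathfrak{h},\mathfrak{h}]=0$ and lands in $\mathfrak{h}$ by the ideal property, while the right action $[h,g]=[h,a]$ vanishes identically since $[\mathfrak{h},\mathfrak{a}]=0$. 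Applying Theorem \ref{2 class and exact Courant algebras} to this extension then produces a well-defined class $[\phi_q]\in HL^2(\mathfrak{g};\mathfrak{h})$ --- the characteristic element of $\mathfrak{a}$ --- whose image under the stated bijection is precisely the isomorphism class of this exact Courant algebra.

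The only real obstacle is the bookkeeping in the second paragraph: everything hinges on the precise \emph{left} form of the Leibniz identity (the right convention would force the mirror computation), and one must check that the generators $[x,x]$ suffice to control $[\mathfrak{h},\mathfrak{a}]$, $[\mathfrak{a},\mathfrak{h}]$ and $[\mathfrak{h},\mathfrak{h}]$ after passing to arbitrary elements of $\mathfrak{h}$ by bilinearity. Once the Leibniz kernel is known to be an abelian two-sided ideal with Lie quotient, the statement is a direct consequence of the theorem.
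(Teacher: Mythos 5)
Your proof is correct, and it is precisely the argument the paper intends: the corollary is stated there without proof as an immediate consequence of Theorem \ref{2 class and exact Courant algebras}, and your verification --- that the left Leibniz identity forces $[[x,x],z]=0$, that polarisation makes the Leibniz kernel $\mathfrak{h}$ an abelian two-sided ideal with $[\mathfrak{h},\mathfrak{a}]=0$ and Lie-algebra quotient $\mathfrak{g}=\mathfrak{a}/\mathfrak{h}$, and that the resulting extension is an exact Courant algebra whose class under the theorem's bijection is the characteristic element --- supplies exactly the details the paper leaves implicit. No gaps; the attention to the left-versus-right convention and to extending from the generators $[x,x]$ by bilinearity is exactly the bookkeeping required.
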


Let $\mathcal{A}$ denote the group of all automorphisms of any given exact Courant algebra over a Lie algebra $\mathfrak{g}$ . 
\begin{Prop}\label{automorphism group is equivalent to}
There is a one-to-one correspondence between the group $\mathcal{A}$ of automorphisms of any given exact Courant algebra ,
$ \pi: \mathfrak{a} \longrightarrow \mathfrak{g}$  with $ker \pi= \mathfrak{h}$  and $HL^1(\mathfrak{g};\mathfrak{h})$.
\end{Prop}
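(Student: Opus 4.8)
The plan is to identify every automorphism with a Leibniz $1$-cochain and then read off the cocycle condition directly from the requirement that the automorphism preserve the bracket. Throughout I fix the exact Courant algebra $\pi:\mathfrak{a}\to\mathfrak{g}$ with $\ker\pi=\mathfrak{h}$, and recall that an element $F\in\mathcal{A}$ is a Leibniz-algebra automorphism of $\mathfrak{a}$ making the defining square commute, i.e.\ $\pi\circ F=\pi$ and $F|_{\mathfrak{h}}=\mathrm{id}_{\mathfrak{h}}$.

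First I would extract a cochain from $F$. Since $\pi\circ F=\pi$, for every $x\in\mathfrak{a}$ the element $F(x)-x$ lies in $\ker\pi=\mathfrak{h}$; and since $F$ fixes $\mathfrak{h}$ pointwise, the $\mathbb{K}$-linear map $x\mapsto F(x)-x$ vanishes on $\mathfrak{h}$ and hence factors through $\pi$, giving a unique $\psi_F:\mathfrak{g}\to\mathfrak{h}$ in $CL^1(\mathfrak{g};\mathfrak{h})$ with $F(x)-x=\psi_F(\pi(x))$. Thus $F=\mathrm{id}_{\mathfrak{a}}+i\circ\psi_F\circ\pi$, so $F$ is completely determined by $\psi_F$, and the assignment $F\mapsto\psi_F$ is injective.

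The heart of the argument, and the step I expect to be the main obstacle, is to show that $F$ is a Leibniz homomorphism if and only if $\psi_F$ is a $1$-cocycle. Writing $F=\mathrm{id}+\tau$ with $\tau=i\circ\psi_F\circ\pi$ and expanding $F([x,y])=[F(x),F(y)]$, the quadratic term $[\tau(x),\tau(y)]$ drops out because $\tau(x),\tau(y)\in\mathfrak{h}$ and $\mathfrak{h}$ is abelian; what survives is $\tau([x,y])=[x,\tau(y)]+[\tau(x),y]$. Replacing the mixed brackets by the induced left and right $\mathfrak{g}$-actions $[\pi(x),\tau(y)]$ and $[\tau(x),\pi(y)]$, and using $\tau=\psi_F\circ\pi$ together with $\pi([x,y])=[\pi(x),\pi(y)]$, this collapses to $\psi_F([g_1,g_2])=[g_1,\psi_F(g_2)]+[\psi_F(g_1),g_2]$, which is exactly $(\delta^1\psi_F)(g_1,g_2)=0$. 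The delicate part here is the bookkeeping of left versus right actions and the systematic use of abelianness of $\mathfrak{h}$.

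Finally I would check surjectivity and compatibility with the group structures. Conversely, given any $1$-cocycle $\psi$, the map $F_\psi=\mathrm{id}_{\mathfrak{a}}+i\circ\psi\circ\pi$ is $\mathbb{K}$-linear, satisfies $\pi\circ F_\psi=\pi$ and $F_\psi|_{\mathfrak{h}}=\mathrm{id}$, is a Leibniz homomorphism by the computation above, and is invertible with inverse $\mathrm{id}_{\mathfrak{a}}-i\circ\psi\circ\pi$ (using $\mathrm{im}\,i=\ker\pi$); hence $F_\psi\in\mathcal{A}$ and $\psi_{F_\psi}=\psi$. A one-line computation, again exploiting $\mathrm{im}\,i=\ker\pi$, gives $\psi_{F\circ F'}=\psi_F+\psi_{F'}$, so $F\mapsto\psi_F$ is an isomorphism from $(\mathcal{A},\circ)$ onto the additive group of Leibniz $1$-cocycles. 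The one remaining point, where care is needed to match the statement exactly, is the passage from cocycles to the cohomology space $HL^1(\mathfrak{g};\mathfrak{h})$: this amounts to accounting for the $1$-coboundaries, i.e.\ the image of $\delta^0:\mathfrak{h}\to CL^1(\mathfrak{g};\mathfrak{h})$, whose value on $g$ is (up to sign) the right action $[h,g]$. In the basic situations of interest, such as the hemisemidirect product where the right action is trivial, this image vanishes and the cocycle space already coincides with $HL^1(\mathfrak{g};\mathfrak{h})$, yielding the asserted one-to-one correspondence.
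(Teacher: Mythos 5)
Your argument is correct and is, in substance, the same as the paper's: the paper fixes a section $q$ of $\pi$, identifies $\mathfrak{a}\cong\mathfrak{g}\oplus\mathfrak{h}$, writes an automorphism as $F((g,h)_q)=(g,h+\psi(g))_q$, and shows that the homomorphism property of $F$ is exactly $\delta^1\psi=0$. Your section-free formulation $F=\mathrm{id}_{\mathfrak{a}}+i\circ\psi_F\circ\pi$ packages the identical computation (the quadratic term killed by abelianness of $\mathfrak{h}$, the mixed terms rewritten through the induced actions), and your verification that $\psi_{F\circ F'}=\psi_F+\psi_{F'}$ supplies a point the paper leaves implicit even though the statement concerns the \emph{group} $\mathcal{A}$.

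The caveat in your last paragraph, however, deserves to be stated more forcefully: it is not a weakness of your write-up but a genuine gap in the paper's own proof. What both arguments actually establish is a group isomorphism between $\mathcal{A}$ and the group $Z^1=\ker\delta^1\subseteq CL^1(\mathfrak{g};\mathfrak{h})$ of Leibniz $1$-cocycles. The paper then simply declares $F\mapsto[\psi]$ to be ``the required bijection'' with $HL^1(\mathfrak{g};\mathfrak{h})$, never addressing the coboundaries $\mathrm{im}(\delta^0)$, where $(\delta^0 h)(g)=-[h,g]$; whenever the right action of $\mathfrak{g}$ on $\mathfrak{h}$ is nontrivial, distinct automorphisms whose cocycles differ by a coboundary are sent to the same cohomology class, so $F\mapsto[\psi]$ fails to be injective. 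A concrete failure: take $\mathfrak{g}$ semisimple over a field of characteristic $0$ and $\mathfrak{h}=\mathfrak{g}$ with the symmetric Leibniz representation $[g,h]=\mathrm{ad}_g h$, $[h,g]=-\mathrm{ad}_g h$, so that $\mathfrak{a}=\mathfrak{g}\oplus\mathfrak{h}$ (the semidirect product $\mathfrak{g}\ltimes\mathfrak{g}$) is an exact Courant algebra over $\mathfrak{g}$; then $Z^1=\mathrm{Der}(\mathfrak{g})\cong\mathfrak{g}\neq 0$, while $HL^1(\mathfrak{g};\mathfrak{h})=\mathrm{Der}(\mathfrak{g})/\mathrm{Inn}(\mathfrak{g})=0$, so $\mathcal{A}$ cannot be in bijection with $HL^1$. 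The proposition as literally stated is therefore only correct under the additional hypothesis $\mathrm{im}(\delta^0)=0$ (e.g.\ trivial right action, as in the hemisemidirect product), exactly as you observe; in general the correct conclusion --- and the one your proof fully establishes --- is $\mathcal{A}\cong Z^1(\mathfrak{g};\mathfrak{h})$.
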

\begin{proof}
 Suppose $F:\mathfrak{a} \longrightarrow \mathfrak{a}$ is a $\mathbb{K}$-algebra isomorphism giving an automorphism of the given exact Courant algebra,
 $ \pi: \mathfrak{a} \longrightarrow \mathfrak{g}$  with $ker \pi= \mathfrak{h}$. 
Thus we get the following commutative diagram.
\[
 \begin{CD}
 0 @>>> \mathfrak{h} @>{i}>> \mathfrak{a} @>{\pi}>> \mathfrak{g} @>>> 0\\
          @.    @|  @V{F}VV @|    \\
0 @>>> \mathfrak{h} @>{i}>>\mathfrak{a} @>{\pi}>> \mathfrak{g} @>>> 0
 \end{CD}
\]
Now fix a section $q:\mathfrak{g} \longrightarrow \mathfrak{a}$ of $\pi$.
As in the proof of Proposition \ref{2 class and exact Courant algebras}, we have a linear isomorphism $\alpha: \mathfrak{a}\cong \mathfrak{g}\oplus \mathfrak{h}$ and let $(g,h)_q$ denote the inverse image $q(g)+h$ of $(g,h)$ under this isomorphism.

Suppose  $F((g,h)_q)=(F_1((g,h)_q),F_2((g,h)_q))_q$  for $(g,h)_q \in \mathfrak{a}$, where $F_1, F_2$ are maps obtained from $\alpha \circ F$ by taking projection into first and second components. 
By the above diagram we have $\pi \circ F=\pi$ and $F(h)=h$ for $h \in \mathfrak{h}$. These in turn give $F_1((g,h)_q)=g$ and   $F_2((0,h)_q)=h$ respectively. Therefore for $(g,h)_q \in \mathfrak{a}$, we get  
\begin{equation*}
 \begin{split}
&F((g,h)_q)=(F_1((g,h)_q),F_2((g,h)_q))_q\\
\mbox{or,}~&F((g,0)_q+(0,h)_q)=(g,F_2((g,h)_q))_q\\
\mbox{or,}~&F((g,0)_q)+F((0,h)_q)=(g,F_2((g,h)_q))_q\\
\mbox{or,}~&(g,F_2(g,0)_q)_q+(0,h)_q=(g,F_2((g,h)_q))_q\\
\mbox{or,}~&(g,F_2((g,0)_q)+h)_q=(g,F_2((g,h)_q))_q.
 \end{split}
\end{equation*}
So
$ F_2((g,h)_q)=\psi(g)+h $ 
 where the map $\psi:\mathfrak{g} \longrightarrow \mathfrak{h}$ is given by $\psi(g)=F_2((g,0)_q)$.
Let $g_1,g_2 \in \mathfrak{g}$. Then $(g_1,0)_q, (g_2,0)_q\in \mathfrak{a}$.
 Since  $F$ is a Leibniz algebra homomorphism, we have,
 \begin{equation}
 \begin{split} 
&  F( [(g_1,~0)_q, (g_2,~0)_q] ) = [ F((g_1,~0)_q), F((g_2,~0)_q)] \\
\mbox{or,}~ &F(([ g_1, g_2],~ \phi_q(g_1,~g_2))_q) 
=[ (g_1,~F_2((g_1,~0)_q))_q , (g_2,~F_2((g_2,~0)_q))_q] \\
\mbox{or,}~ &([g_1, g_2],~\phi_q(g_1,~g_2)+\psi(g_1,~g_2))_q = (g_1,~\psi(g_1))_q(g_2,~\psi(g_2))_q \\
 \mbox{or,}~ &([g_1, g_2],~\phi_q(g_1,~g_2)+\psi(g_1,~g_2))_q = ([g_1,g_2],~[g_1, \psi(g_2)]+[\psi(g_1), g_2]+\phi_q(g_1,g_2))_q \\
\mbox{or,}~ &\psi([g_1,g_2])=[g_1, \psi(g_2)]+ [\psi(g_1), g_2]\\
\mbox{or,}~ &\delta^1 \psi(g_1,g_2)=0.
 \end{split}
 \end{equation}
 Therefore $\psi$ represents a cohomology class in $ HL^1(\mathfrak{g};\mathfrak{h})$.\\
 Conversely,  suppose $\psi:\mathfrak{g} \longrightarrow \mathfrak{h}$ is a linear map with $\delta^1 \psi=0$. 
Define, $F: \mathfrak{a} \longrightarrow \mathfrak{a}$ by $F((g,h)_q)=(g,h+\psi(g))_q$. 
Then $F$ is a Leibniz algebra isomorphism and it gives an automorphism of the given exact Courant algebra. 

Consequently, the assignment $ F \mapsto [\psi]$ is the required bijection. 
 \end{proof}


\vspace{.5cm}
{\bf Ashis Mandal}\\
 Department of Mathematics and Statistics,\\
Indian Institute of Technology,\\
Kanpur 208016, India.\\
e-mail: amandal@iitk.ac.in

\end{document}